\newtheorem{Lm}{Lemma}[section]
\newtheorem{Thm}[Lm]{Theorem}
\newtheorem{Prop}[Lm]{Proposition}
\newtheorem{Cl}[Lm]{Claim}
\newtheorem{Rem}[Lm]{Remark}
\newtheorem{Cor}{Corollary}
\def\R{\mathbb{R}}
\def\Z{\mathbb{Z}}
\def\T{\mathbb{T}}
\def\bdef{\begin{Def}}
\def\endef{\end{Def}}
\def\bthm{\begin{Thm}}
\def\ethm{\end{Thm}}
\def\bprop{\begin{Prop}}
\def\enprop{\end{Prop}}
\def\blm{\begin{Lm}}
\def\elm{\end{Lm}}
\def\bcor{\begin{Cor}}
\def\ecor{\end{Cor}}
\def\brm{\begin{Rem}}
\def\erm{\end{Rem}}
\def\bfig{\begin{picture}}
\def\efig{\end{picture}}
\def\beq{\begin{eqnarray}}
\def\eneq{\end{eqnarray}}
\def\beal{\begin{aligned}}
\def\enal{\end{aligned}}
\title[The Density of States Measure of the Fibonacci Hamiltonian]{The Density of States Measure of the Weakly Coupled Fibonacci Hamiltonian}
\author{David Damanik}
\address{Department of Mathematics, Rice University, Houston, TX~77005, USA}
\email{damanik@rice.edu}
\thanks{D.\ D.\ was supported in part by NSF grants DMS--0800100 and DMS--1067988.}
\author{Anton Gorodetski}
\address{Department of Mathematics, University of California, Irvine CA 92697, USA}
\email{asgor@math.uci.edu}
\thanks{A.\ G.\ was supported in part by NSF grants DMS--0901627 and IIS-1018433.}
\date{\today}
\begin{document}

\begin{abstract}
We consider the density of states measure of the Fibonacci Hamiltonian and show that, for small values of the coupling constant $V$,  this measure is exact-dimensional and the almost everywhere value $d_V$ of the local scaling exponent is a smooth function of $V$, is strictly smaller than the Hausdorff dimension of the spectrum, and converges to one as $V$ tends to zero. The proof relies on a new connection between the density of states measure of the Fibonacci Hamiltonian and the measure of maximal entropy for the Fibonacci trace map on the non-wandering set in the $V$-dependent invariant surface. This allows us to make a connection between the spectral problem at hand and the dimension theory of dynamical systems.
\end{abstract}

\maketitle

%\tableofcontents

\section{Introduction}

The Fibonacci Hamiltonian is a central model in the study of electronic properties of one-dimensional quasicrystals. It is given by the following bounded self-adjoint operator in
$\ell^2(\Z)$,
$$
[H_{V,\omega} \psi] (n) = \psi(n+1) + \psi(n-1) + V
\chi_{[1-\alpha , 1)}(n\alpha + \omega \!\!\! \mod 1) \psi(n),
$$
where $V > 0$, $\alpha = \frac{\sqrt{5}-1}{2}$, and $\omega \in \T = \R / \Z$. It is well known and easy to see that the spectrum of $H_{V,\omega}$ does not depend on $\omega$ and hence may be denoted by $\Sigma_V$.

The Fibonacci Hamiltonian has been studied in numerous papers and many results have been obtained for it; compare the survey articles \cite{D00, D07, D09, S95}. For example, for every $V>0$, the spectrum $\Sigma_V$ is a Cantor set of zero Lebesgue measure and, for every $\omega$, the operator $H_{V,\omega}$ has purely singular continuous spectrum. Naturally, one wants to go beyond these statements and study the fractal dimension of the spectrum and of the spectral measures, as well as the transport exponents. Such finer issues are reasonably well understood for $V$ sufficiently large \cite{DEGT, DT03, DT07, DT08, KKL, Ra}. On the other hand, the methods used in the large coupling regime clearly do not extend to small values of $V$. For this reason, we have developed in a series of papers, \cite{DG1, DG2, DG3}, tools that allow one to study and answer the issues above in the small coupling regime. The results obtained in these papers are satisfactory as far as the spectrum as a set is concerned. While there are results for spectral measures and transport exponents, they do not seem to be optimal. In particular, in the $V \downarrow 0$ limit, the estimates we have obtained for these quantities do not approach the values the quantities take for $V = 0$. In this paper we introduce new ideas that lead to satisfactory results for phase-averaged spectral measures, namely, the density of states measure.

Let us recall the definition of the density of states measure. By the spectral theorem, there are Borel probability measures $d\mu_{V,\omega}$ on $\R$ such that
$$
\langle \delta_0 , g(H_{V,\omega}) \delta_0 \rangle = \int g(E) \, d\mu_{V,\omega}(E)
$$
for all bounded measurable functions $g$. The \emph{density of states measure} $dN_V$ is given by the $\omega$-average of these measures with respect to Lebesgue measure, that is,
$$
\int_\T \langle \delta_0 , g(H_{V,\omega}) \delta_0 \rangle \, d\omega= \int g(E) \, dN_V(E)
$$
for all bounded measurable functions $g$. By general principles, the density of states measure is non-atomic and its topological support is $\Sigma_V$.

The density of states measure can also be obtained by counting the number of eigenvalues per unit volume, in a given energy region, of restrictions of the operator to finite intervals (which explains the terminology). Indeed, for any real $a < b$,
$$
N_V(a,b) = \lim_{L \to \infty} \frac{1}{L} \# \big\{ \text{eigenvalues of } H_{V,\omega}|_{[1,L]} \text{ that lie in } (a,b) \big\},
$$
uniformly in $\omega$. Here, for definiteness, $H_{V,\omega}|_{[1,L]}$ is defined with Dirichlet boundary conditions. Note that the definitions of $H_{V,\omega}$, $\Sigma_V$, and $dN_V$ extend naturally to $V = 0$. It is well known that $\Sigma_0 = [-2,2]$ and
\begin{equation}\label{e.freeids}
N_0(-\infty,E) = \begin{cases} 0 & E \le -2 \\ \frac{1}{\pi} \arccos \left( - \frac{E}{2} \right) & -2 < E < 2 \\ 1 & E \ge 2. \end{cases}
\end{equation}

The following theorems are the main results of this paper:

\begin{Thm}\label{t.main}
There exists $0 < V_0 \le \infty$ such that for $V \in (0,V_0)$, there is $d_V \in (0,1)$ so that the density of states measure $dN_V$ is of exact dimension $d_V$, that is, for $dN_V$-almost every $E \in \R$, we have
$$
\lim_{\varepsilon \downarrow 0} \frac{\log N_V(E - \varepsilon , E + \varepsilon)}{\log \varepsilon} = d_V.
$$
Moreover, in $(0,V_0)$,  $d_V$ is a $C^\infty$ function of $V$, and
$$
\lim_{V \downarrow 0} d_V = 1.
$$
\end{Thm}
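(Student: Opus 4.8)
The plan is to transfer the spectral problem to a dynamical one via the trace map, where the density of states measure becomes the measure of maximal entropy, and then deploy the thermodynamic formalism and dimension theory of the non-wandering set. Concretely, I would proceed as follows.

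\textbf{Step 1: Trace map and the coding of the spectrum.} The standard tool here is the Fibonacci trace map $T:\R^3\to\R^3$, which governs the transfer matrices of $H_{V,\omega}$ along the Fibonacci substitution hierarchy. For each $V$ there is an invariant surface $S_V$ on which $T$ restricts to a (smooth) diffeomorphism, and the spectrum $\Sigma_V$ is naturally identified with the set of parameters whose forward orbits under $T$ stay bounded — equivalently, with the stable lamination of the non-wandering set $\Omega_V$ of $T|_{S_V}$. For small $V$, $T|_{S_V}$ is a hyperbolic (Axiom A) surface diffeomorphism, so $\Omega_V$ is a locally maximal hyperbolic set carrying a natural symbolic coding (a subshift of finite type). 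I would first recall, from the earlier papers \cite{DG1,DG2,DG3}, the precise statement that $\Sigma_V$ is bijectively and bi-Hölder identified with (a transversal cut of) $\Omega_V$, so that dimension-theoretic questions about $\Sigma_V$ translate into dimension-theoretic questions about the hyperbolic set.

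\textbf{Step 2: Identify $dN_V$ with the measure of maximal entropy.} This is the conceptual heart of the paper and, I expect, the main obstacle. One must show that under the identification of Step 1, the density of states measure $dN_V$ corresponds to the measure of maximal entropy $\mu_{\max}$ of $T|_{\Omega_V}$. The intuition is that the integrated density of states counts periodic-orbit/eigenvalue data at an exponential rate governed by topological entropy, and that the $\omega$-averaging in the definition of $dN_V$ produces exactly the equidistribution property characterizing $\mu_{\max}$. The rigorous link is to compare the IDS, expressed via $N_V(-\infty,E)=\lim_L \tfrac1L\#\{\text{eigenvalues in }(-\infty,E)\}$, with the distribution of periodic points of $T$, using that the Fibonacci numbers $L=F_k$ grow exponentially and that the number of relevant eigenvalues below energy $E$ is asymptotic to $e^{k\,h_{\mathrm{top}}}\,\mu_{\max}(\{\text{orbit segment codes landing below }E\})$. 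Establishing this equidistribution uniformly in $E$ — and hence as an identity of measures, not merely of total masses — is delicate and is where most of the technical work will sit.

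\textbf{Step 3: Exact dimensionality and smoothness.} Once $dN_V=\mu_{\max}$ under the coding, exact-dimensionality of $dN_V$ follows from exact-dimensionality of $\mu_{\max}$ pushed through the bi-Hölder identification, but one must be careful: the relevant local scaling exponent on $\R$ is governed by the contraction rates transverse to the stable lamination. By the Ledrappier–Young / Barreira–Pesin–Schmeling theory, the ergodic measure $\mu_{\max}$ is exact-dimensional on $\Omega_V$, and the almost-everywhere local dimension on the energy axis equals $d_V=h_{\mathrm{top}}(T|_{\Omega_V})/\!\int \log|D T|_{E^u}|\,d\mu_{\max}$, i.e.\ the ratio of the measure-theoretic entropy (which equals $h_{\mathrm{top}}$ for $\mu_{\max}$) to the Lyapunov exponent of $\mu_{\max}$. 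Smoothness of $V\mapsto d_V$ then reduces to smooth dependence of (i) the hyperbolic set $\Omega_V$ and its invariant measure $\mu_{\max}$, and (ii) the entropy and Lyapunov exponent, all of which follow from structural stability of Axiom A systems together with the differentiability of thermodynamic pressure and of equilibrium states with respect to parameters (Ruelle's analyticity / Katok–Knieper–Pollicott regularity results), invoking that $V\mapsto T|_{S_V}$ is a smooth family. For the strict inequality $d_V<\dim_H\Sigma_V$, I would use that $\dim_H\Sigma_V$ is realized by the equilibrium state of a different, geometric potential (the one making entropy equal the stable-Lyapunov exponent), and that $\mu_{\max}$ is \emph{not} this equilibrium state for $V>0$ because the system is not conformal/the two potentials are cohomologically distinct — so the variational principle yields a strict gap.

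\textbf{Step 4: The limit $V\downarrow0$.} Finally, as $V\to0$ the operator degenerates to the free Laplacian with $\Sigma_0=[-2,2]$ and the explicit $N_0$ in \eqref{e.freeids}, which is (locally) absolutely continuous, giving local dimension $1$. On the dynamical side, as $V\to0$ the hyperbolic set $\Omega_V$ collapses onto a (normally hyperbolic, but transversally degenerating) invariant curve, and the ratio $h_{\mathrm{top}}/\text{Lyapunov}$ should tend to $1$ because the transverse contraction rate and the entropy balance in the conformal limit. I would make this precise by controlling the asymptotics of both the numerator and denominator of $d_V$ as $V\downarrow0$, using the explicit unperturbed trace-map dynamics at $V=0$; the continuity established in Step 3 then upgrades this to $\lim_{V\downarrow0}d_V=1$. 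The main obstacle throughout remains Step 2 — the precise measure-theoretic identification of $dN_V$ with $\mu_{\max}$ — since Steps 3 and 4 are then largely an application of the well-developed smooth ergodic and dimension theory of hyperbolic sets.
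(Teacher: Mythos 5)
Your overall architecture --- trace map, identification of $dN_V$ with the measure of maximal entropy $\mu_V$ of $T|_{\Omega_V}$, the Pesin-type formula $d_V = h_{\mathrm{top}}(T|_{\Omega_V})/\mathrm{Lyap}^u(\mu_V)$, and continuity/smoothness in $V$ --- coincides with the paper's. The genuine gap is in your Step 2, which you yourself flag as the main obstacle but then only sketch heuristically. The route you propose (counting eigenvalues of $H_{V,\omega}|_{[1,F_k]}$ against periodic points of $T$ and invoking equidistribution with respect to $\mu_{\max}$) is not carried out and faces a concrete mismatch: the box $[1,F_k]$ contributes $F_k \sim \mu^k$ eigenvalues, while the relevant orbit count for $T$ grows like $e^{k h_{\mathrm{top}}} = \mu^{2k}$, and no mechanism is given that converts the $\omega$-average in the definition of $dN_V$ into the $\mu_{\max}$-measure of symbolic cylinders. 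As written, Step 2 is an announcement of the key lemma rather than a proof of it.

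The paper closes this gap by a different, soft rigidity argument. First, at $V=0$ the identification is an explicit computation: under the semiconjugacy $F(\theta,\varphi) = (\cos 2\pi(\theta+\varphi), \cos 2\pi\theta, \cos 2\pi\varphi)$, normalized Lebesgue measure on the interval $I=[0,1/2]\times\{0\}$ pushes forward to the arcsine law \eqref{e.freeids}, i.e.\ to $dN_0$ on $\Sigma_0=[-2,2]$ (Claim~\ref{c.1}). Second, for $V>0$ one takes as candidate the measure obtained by restricting $\mu_V$ to the continuation $\mathcal{R}_V$ of the Markov rectangle containing $\ell_0\cap\mathbb{S}$, normalizing, and projecting to $\ell_V$ along stable leaves, and verifies four properties: correct support, correct value at $V=0$, continuity in $V$, and $V$-independence of the mass between any two continuously varying gaps (the latter because that mass equals $\mu_{\max}(h_V^{-1}(\Pi_V))$ for a $V$-independent symbolic set). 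Since $dN_V$ itself satisfies the same four properties --- the last being the gap-labelling fact that $N_V$ is locally constant in $V$ on gaps --- and a non-atomic measure on a Cantor set is determined by its values on intervals between gaps, the candidate equals $dN_V$; no eigenvalue equidistribution is needed. Your Steps 3 and 4 then essentially match the paper (exact dimensionality of $\mu_V^u$ via the local product structure and \cite[Proposition 26.1]{P}, transported by the $C^1$ stable holonomy; $\lim_{V\downarrow 0}d_V=1$ from $d_V=L_0/L_V$ and continuity of the lifted potentials), except that for smoothness the paper does not invoke Ruelle-type differentiability of pressure: the lifted measure $\tilde\mu$ on $\Sigma_{TMC}$ is $V$-independent, so only the potential $\tilde\varphi_V=\varphi_V\circ h_V$ varies, and its $C^\infty$ dependence on $V$ comes from $r$-normal hyperbolicity of the central lamination $\bigcup_{V}\Omega_V$ (Hirsch--Pugh--Shub), which is stronger than what structural stability alone provides.
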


\begin{Thm}\label{t.main2}
For $V>0$ sufficiently small, we have $d_V < \dim_H \Sigma_V$.
\end{Thm}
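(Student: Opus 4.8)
The plan is to pass to the dynamical side and phrase both $d_V$ and $\dim_H \Sigma_V$ as dimension-theoretic quantities of the non-wandering set $\Omega_V$ of the trace map in the invariant surface $S_V$, and then to reduce the strict inequality to a non-rigidity (``not cohomologous to a constant'') statement for the geometric potential. First I would invoke the dictionary underlying the proof of Theorem~\ref{t.main}: there is a H\"older conjugacy between $(\Sigma_V, dN_V)$ and $\Omega_V$ equipped with its measure of maximal entropy $\mu$, under which the local scaling exponent of $dN_V$ becomes the transverse pointwise dimension of $\mu$. Since $\Omega_V$ is a basic set of a smooth surface diffeomorphism $f$, the dynamics is automatically conformal along the one-dimensional stable/unstable directions, so the Young dimension formula applies and gives
\[
d_V = \frac{h_\mu(f)}{\lambda(\mu)}, \qquad \lambda(\mu) = \int_{\Omega_V} \log \| Df|_{E^u} \| \, d\mu,
\]
with $h_\mu(f) = h_{\mathrm{top}}(f|_{\Omega_V})$ because $\mu$ is the measure of maximal entropy.

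On the other hand, by the results of \cite{DG1, DG2, DG3}, $\dim_H \Sigma_V$ equals the corresponding dimension $t_V$ of $\Omega_V$, which by the dimension variational principle in the conformal setting can be written as $\dim_H \Sigma_V = t_V = \sup_\nu h_\nu(f)/\lambda(\nu)$, the supremum over ergodic invariant measures being attained at the unique measure of maximal dimension $\mu_*$; equivalently $t_V$ is the Bowen root of the pressure equation $P\big({-t_V}\log\|Df|_{E^u}\|\big)=0$, and $\mu_*$ is the equilibrium state of the geometric potential $-t_V\log\|Df|_{E^u}\|$. Comparing this with the formula for $d_V$ yields $d_V \le \dim_H \Sigma_V$ for free, since $\mu$ is one competitor in the supremum. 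Equality holds if and only if $\mu$ attains the supremum, i.e. if and only if $\mu = \mu_*$. As two H\"older equilibrium states coincide precisely when their potentials differ by a coboundary plus a constant, and the two potentials here are $0$ and $-t_V\,\varphi_V$ with $\varphi_V := \log\|Df|_{E^u}\|$ and $t_V>0$, I have reduced the claim $d_V < \dim_H \Sigma_V$ to showing that $\varphi_V$ is \emph{not} cohomologous to a constant on $\Omega_V$ for small $V>0$.

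To establish the non-cohomology I would use the Liv\v{s}ic obstruction: $\varphi_V$ is cohomologous to a constant if and only if all periodic orbits of $f|_{\Omega_V}$ have the same period-normalized unstable multiplier $\frac{1}{n}\log\|Df^n|_{E^u}\|$. It therefore suffices to exhibit two periodic orbits with distinct such multipliers. Since for small $V$ the set $\Omega_V$ and its periodic orbits are obtained by continuation from the singular surface $S_0$, I would select two short periodic orbits admitting explicit continuation, expand the invariant surface, the hyperbolic splitting, and the derivative cocycle in powers of $V$, and compute the leading asymptotics of their normalized multipliers as $V \downarrow 0$.

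The main obstacle is precisely this last step. As $V \downarrow 0$ the hyperbolic set degenerates and all multipliers collapse toward a common value, so the difference of the two normalized multipliers shrinks with $V$; the crux is to carry the $V$-expansion of the surface, the splitting, and the cocycle to high enough order to identify the first nonvanishing term of this difference and to verify that it does not cancel. Equivalently, one must rule out the nongeneric rigidity in which $f|_{\Omega_V}$ would be conjugate to a model with spatially constant unstable expansion rate. Once a strictly positive (even if small) gap between the two periodic multipliers is secured for all small $V>0$, the Liv\v{s}ic criterion gives $\mu \ne \mu_*$ and hence $d_V < \dim_H \Sigma_V$.
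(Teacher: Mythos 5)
Your overall architecture coincides with the paper's: both express $d_V$ as $h_{\mathrm{top}}(T|_{\Omega_V})/\mathrm{Lyap}^u(\mu_V)$, bound this by the unstable dimension of $\Omega_V$ (which equals $\dim_H\Sigma_V$ by the transversality of $\ell_V$ to the stable lamination) via the McCluskey--Manning/Bowen pressure formula \cite{MM}, reduce the case of equality to the statement that the geometric potential $\log\|DT|_{E^u}\|$ is cohomologous to a constant, and then invoke the Liv\v{s}ic periodic-orbit obstruction. Up to that point your reduction is sound. The genuine gap is the final step: you never exhibit two periodic orbits with distinct normalized unstable multipliers; you label this ``the main obstacle,'' sketch a perturbative expansion in $V$, and do not carry it out, so the proof is missing precisely where the content lies.

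Moreover, your stated reason for the difficulty rests on a false premise. You assert that as $V\downarrow 0$ all multipliers collapse toward a common value, so that one must extract the first nonvanishing term of a difference that shrinks with $V$. That is not what happens. The limit $T|_{\mathbb{S}}$ is a pseudo-Anosov factor of the hyperbolic automorphism $\mathcal{A}$ of $\T^2$, not a parabolic degeneration, and its periodic orbits retain genuinely different normalized expansion rates; in particular, orbits born from the conic singularities behave differently from generic ones because the semiconjugacy $F$ is $2$-to-$1$ off the singular set and $1$-to-$1$ at it. The paper exploits exactly this: the six-cycle through $(0,0,a)$ with $a^2 = 1 + \frac{V^2}{4}$ has normalized multiplier tending to $(9+4\sqrt{5})^{1/6} = \frac{1+\sqrt{5}}{2}$ as $V \downarrow 0$, while the two-cycle continuing from the singularity $(1,1,1)$ has normalized multiplier tending to $\frac{3+\sqrt{5}}{2}$. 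These two limits are already distinct, so continuity of the multipliers in $V$ gives the required gap for all small $V>0$ with no higher-order expansion whatsoever; the whole step is the explicit computation of $DT^6(0,0,1)$ and $DT(1,1,1)$. Your proposal does not supply this computation, and the strategy you outline for supplying it (matching asymptotics of a putatively vanishing difference) looks for the answer in the wrong place.
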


\noindent\textit{Remarks.}
(a) Theorem~\ref{t.main2} confirms a conjecture of Barry Simon (that $\inf \{ \dim_H(S) : N_V(S) = 1 \} < \dim_H \Sigma_V$ for all $V > 0$), at least in the small coupling regime. The conjecture is motivated by the fact that the density of states measure is the equilibrium measure on the spectrum and that results of the same kind are known for equilibrium measures on other kinds of Cantor sets; compare Remark~(g) below.
\\[1mm]
(b) To the best of our knowledge, this provides the first example of an ergodic family of Schr\"odinger operators with singular density of states measure for which exact dimensionality can be shown. In the more general class of Jacobi matrices, one can use inverse spectral theory to generate examples with this property (this is implicit in \cite{BBM}), but these examples are not of Schr\"odinger form.  %\footnote{In the more general class of Jacobi matrices, one can use inverse spectral theory to generate examples with this property; this is implicit in \cite{BBM}. These examples are not of Schr\"odinger form.}
\\[1mm]
(c) After this work was completed, we learned from Serge Cantat that he has closely related (unpublished) work.
\\[1mm]
(d) Similar results can be shown for the density of states measure associated with potentials generated by invertible primitive substitutions over two symbols. We do not elaborate on this here and leave the details to a future publication.
\\[1mm]
(e) The value of $V_0$ corresponds to the possible ``breakdown of transversality.'' As we will recall in Section~\ref{s.2} below, a crucial role in the analysis of the Fibonacci Hamiltonian is played by the dynamical properties of the trace map, a polynomial transformation of $\R^3$, which may be restricted to invariant surfaces $S_V$ indexed by the coupling constant $V$. It is known that for each of the restrictions, the non-wandering set is hyperbolic, and hence we obtain foliations into stable and unstable manifolds. The energy parameter of the spectral problem corresponds to the points of a line of initial conditions lying in the invariant surface. The statements in Theorem~\ref{t.main} hold for any $V_0$ for which this line is transversal to the stable manifolds of the points in the non-wandering set. It is known that this transversality condition holds for $V \in (0,V_0)$ if $V_0 > 0$ is sufficiently small. On the other hand, it is well possible that transversality holds for all $V > 0$ (and hence $V_0$ may be chosen to be infinite). In any event, the present paper motivates a further investigation of the transversality issue with the goal of identifying the optimal choice of $V_0$.
\\[1mm]
(f) Theorem~\ref{t.main} implies that for $V \in (0,V_0)$,
\begin{align*}
d_V & = \inf \{ \dim_H(S) : N_V(S) = 1 \} \\
& = \inf \{ \dim_H(S) : N_V(S) > 0 \} \\
& = \inf \{ \dim_P(S) : N_V(S) = 1 \} \\
& = \inf \{ \dim_P(S) : N_V(S) > 0 \}.
\end{align*}
Here, $\dim_H(S)$ and $\dim_P(S)$ denote the Hausdorff dimension and the packing dimension of $S$, respectively; see Falconer \cite[Sections~2.1 and 10.1]{F} for definitions and the description of the four numbers above in terms of local scaling exponents of the measure and their coincidence in case the local scaling exponents are almost everywhere constant. Since the spectrum $\Sigma_V$ is the topological support of $N_V$, its dimension bounds $d_V$ from above.
\\[1mm]
(g) The density of states measure can be interpreted in a variety of ways in addition to the spectral definition given above. Via the Thouless formula, it can be seen that $dN_V$ is the equilibrium measure for the set $\Sigma_V$ (in the sense of logarithmic potential theory) since the Lyapunov exponent of the associated Schr\"odinger cocycles vanishes throughout the spectrum; see Simon \cite[Theorem~1.15]{S07} for this consequence of vanishing Lyapunov exponents in the context of ergodic Jacobi matrices. This in turn shows that $dN_V$ is also equal to the harmonic measure on $\Sigma_V$, relative to the point $\infty$; compare \cite{GM}. As a consequence, the statement that $\inf \{ \dim_H(S) : N_V(S) = 1 \}$ is strictly less than $\dim_H \Sigma_V$ is related in spirit to work of Makarov and Volberg \cite{Mak, MV, V}.
\\[1mm]
(h) We expect that $\lim_{V \downarrow 0} \frac{1 - d_V}{V}$ exists. In fact, since it is known \cite{DG3} that there are positive constants $C_1 , C_2$ such that for $V > 0$ small enough, $C_1 V \le 1 - \dim_H \Sigma_V \le C_2 V$, it follows that, if $\lim_{V \downarrow 0} \frac{1 - d_V}{V}$ exists, it must be strictly positive.
\\[1mm]
(i) Taking into account Remark (c), we expect that $d_V$ is well defined for all $V>0$. Moreover, it is reasonable to expect that $d_V$ is an analytic function of $V>0$, and $d_V< \dim_H \Sigma_V$ for all $V>0$, but we have not been able to prove this.
\\[1mm]
(j) The fact that $d_V$ is close to $1$ for small $V > 0$ potentially has applications to phase-averaged transport.  It is well known that dimensional estimates for spectral measures yield lower bounds for moments of the position operator. As alluded to above, the known dimensional estimates for the spectral measures of $H_{V,\omega}$ are relatively weak, and are in particular not close to $1$. The fact that phase-averaged spectral measures have dimension close to $1$ for $V$ small give rise to the hope that, as a consequence, phase-averaged moments of the position operator satisfy a correspondingly strong lower bound. Let us make explicit what needs to be shown to derive such a conclusion. Suppose we have the initial state $\psi(0)=\delta_0$ that is exposed to a time-evolution given by the Schr\"odinger equation $i \partial_t \psi(t) = H_{V,\omega} \psi(t)$. The solution is given by $\psi(t) = e^{-itH_{V,\omega}} \delta_0$. Thus, $n \mapsto | \langle \delta_n , \psi(t) \rangle |^2$ is a probability distribution on $\Z$ (describing the probability that the state is at site $n$ at time $t$). For $p > 0$, the $p$-th moment of this distribution is given by
$$
M(p,t,V,\omega) = \sum_{n \in \Z} |n|^p | \langle \delta_n , e^{-itH_{V,\omega}} \delta_0 \rangle |^2.
$$
Let us now average with respect to $t$ and $\omega$ and consider, for $T > 0$,
$$
\langle M \rangle (p,T,V) = \int_\T \int_0^T M(p,t,V,\omega) \, \frac{dt}{T} \, d\omega.
$$
To capture the power-law growth (in $T$) of these quantities, we define the associated transport exponents by
$$
\beta^-(p,V) = \liminf_{T \to \infty} \frac{\log (1 + \langle M \rangle (p,T,V))}{p \log T}.
$$
It follows from general principles that $\beta^-(p,V)$ is non-decreasing in $p$ and takes values in $[0,1]$. One would expect that $\beta^-(\cdot,V)$ is uniformly close to $1$ if $V$ is close to $0$. We thus ask the question whether
\begin{equation}\label{e.conj}
\inf_{p > 0} \beta^-(p,V) \ge d_V
\end{equation}
holds for the Fibonacci Hamiltonian. Let us note that a result of this kind has been shown for (generalizations of) the critical almost Mathieu operator by Bellissard, Guarneri, and Schulz-Baldes \cite{BGS}. We also note that a result like \eqref{e.conj} cannot hold in complete generality as random or pseudo-random potentials provide counterexamples.

%\bigskip

%The structure of the paper is the following. We first describe the measure of maximal entropy for $T_V |_{\Omega_V}$. Then we describe the density of states measure as a Markov measure on a piece of an unstable manifold of the trace map using the previous construction. Then we deduce that the density of states measure is exact dimensional using the dimension theory of dynamical systems. Finally, we use the relation between entropy, Lyapunov exponent, and the dimension of the measure in order to show that $d_V \to 1$ as $V \to 0$. This relation also shows how the explicit estimates from Remark~(b) above could be obtained. We will only sketch the details.

%\section{The Measure of Maximal Entropy for $T_V$}

%\section{The Density of States Measure as a Markov Measure}

%\section{Relation Between Entropy, Lyapunov Exponent, and the Dimension of the Density of States Measure}

%\section{Properties of Lyapunov Exponent}

\section{Preliminaries}\label{s.2}

In this section we summarize some known results and connections which we will use in the proof of Theorems \ref{t.main} and \ref{t.main2} given in the next section.

\subsection{The Trace Map}

There is a fundamental connection between the spectral properties of the Fibonacci Hamiltonian and the dynamics of the \textit{trace map}
$$
T : \Bbb{R}^3 \to \Bbb{R}^3, \; T(x,y,z)=(2xy-z,x,y).
$$
The function
\beq\label{e.FVinvariant}
G(x,y,z) = x^2+y^2+z^2-2xyz-1
\eneq
is invariant under the action of $T$, and hence $T$ preserves the family of cubic surfaces
$$
S_V = \left\{(x,y,z)\in \Bbb{R}^3 : x^2+y^2+z^2-2xyz=1+ \frac{V^2}{4} \right\}.
$$
It is therefore natural to consider the restriction $T_{V}$ of the trace map $T$ to the invariant surface $S_V$. That is, $T_{V}:S_V \to S_V$, $T_{V}=T|_{S_V}$. We denote by $\Omega_{V}$ the set of points in $S_V$ whose full orbits under $T_{V}$ are bounded. It is known that $\Omega_V$ is equal to the non-wandering set of $T_V$; indeed, it follows from \cite{Ro} that every unbounded orbit must escape to infinity together with its neighborhood (either in positive or negative time), hence is wondering, and hyperbolicity of $\Omega_V$ shown in \cite{Can} implies that every point of $\Omega_V$ is non-wandering.

\subsection{Hyperbolicity of the Trace Map}

Recall that an invariant closed set $\Lambda$ of a diffeomorphism $f : M \to M$ is \textit{hyperbolic} if there exists a splitting of the tangent space $T_xM=E^u_x\oplus E^u_x$ at every point $x\in \Lambda$ such that this splitting is invariant under $Df$, the differential $Df$ exponentially contracts vectors from the stable subspaces $\{E^s_x\}$, and the differential of the inverse, $Df^{-1}$, exponentially contracts vectors from the unstable subspaces $\{E^u_x\}$. A hyperbolic set $\Lambda$ of a diffeomorphism $f : M \to M$ is \textit{locally maximal} if there
exists a neighborhood $U$ of $\Lambda$ such that
$$
\Lambda=\bigcap_{n\in\Bbb{Z}}f^n(U).
$$
It is known that for $V > 0$, $\Omega_{V}$ is a locally maximal compact transitive hyperbolic set of $T_{V} : S_V \to S_V$; see \cite{Can, Cas, DG1}.

\subsection{Properties of the Trace Map for $V=0$}\label{ss.vequzero}

The surface
$$
\mathbb{S} = S_0 \cap \{ (x,y,z)\in \Bbb{R}^3 : |x|\le 1, |y|\le 1, |z|\le 1\}
$$
is homeomorphic to $S^2$, invariant under $T$, smooth everywhere except at the four points $P_1=(1,1,1)$, $P_2=(-1,-1,1)$, $P_3=(1,-1,-1)$, and $P_4=(-1,1,-1)$, where $\mathbb{S}$ has conic singularities, and the trace map $T$ restricted to $\mathbb{S}$ is a factor of the hyperbolic automorphism of $\T^2 = \R^2 / \Z^2$ given by
\beq\label{e.a}
\mathcal{A}(\theta, \varphi) = (\theta + \varphi, \theta)\ (\text{\rm mod}\ 1).
\eneq
The semi-conjugacy is given by the map
\begin{equation}\label{e.semiconj}
F: (\theta, \varphi) \mapsto (\cos 2\pi(\theta + \varphi), \cos 2\pi \theta, \cos 2\pi \varphi).
\end{equation}
The map $\mathcal{A}$ is hyperbolic, and is given by the matrix $A = \begin{pmatrix} 1 & 1 \\ 1 & 0 \end{pmatrix}$, which has eigenvalues
$$
\mu=\frac{1+\sqrt{5}}{2}\ \ \text{\rm and} \ \ \ -\mu^{-1}=\frac{1-\sqrt{5}}{2}.
$$
A Markov partition for the map $\mathcal{A}:\mathbb{T}^2\to \mathbb{T}^2$ is shown in
Figure~\ref{fig.Casdagli-Markov1}. Its image under the map $F:\mathbb{T}^2\to \mathbb{S}$ is a Markov
partition for the pseudo-Anosov map $T:\mathbb{S}\to \mathbb{S}$.

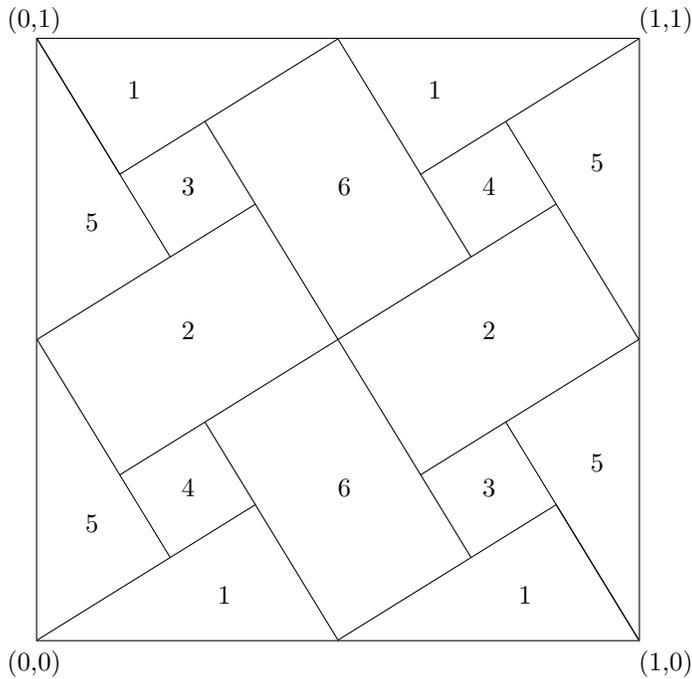
\begin{figure}[htb]
\setlength{\unitlength}{0.8mm}
\begin{picture}(120,120)

\put(10,10){\framebox(100,100)}

\put(5,5){(0,0)}

\put(110,5){(1,0)}

\put(5,112){(0,1)}

\put(110,112){(1,1)}

\put(10,10){\line(161,100){36.2}}

\put(60,10){\line(-61,100){13.8}}

\put(40,16){$1$}

\put(60,10){\line(161,100){36.2}}

\put(110,10){\line(-61,100){13.8}}

\put(90,16){$1$}

\put(60,110){\line(-161,-100){36.2}}

\put(10,110){\line(61,-100){13.8}}

\put(25,100){$1$}

\put(110,110){\line(-161,-100){36.2}}

\put(60,110){\line(61,-100){13.8}}

\put(75,100){$1$}

\put(10,60){\line(61,-100){22.1}}

\put(18,28){$5$}

\put(10,60){\line(161,100){22.1}}

\put(10,110){\line(61,-100){22.1}}

\put(18,78){$5$}

\put(110,60){\line(-61,100){22.1}}

\put(102,38){$5$}

\put(110,60){\line(-161,-100){22.1}}

\put(110,10){\line(-61,100){22.1}}

\put(102,88){$5$}

\put(60,60){\line(-161,-100){36.3}}

\put(60,60){\line(161,100){36.3}}

\put(60,60){\line(-61,100){22.1}}

\put(60,60){\line(61,-100){22.1}}

\put(88,46.35){\line(-161,-100){14.3}}

\put(32,73.65){\line(161,100){14.3}}

\put(37.88,46.28){\line(61,-100){8.4}}

\put(82.12,73.72){\line(-61,100){8.4}}

\put(34,34){$4$}

\put(84,84){$4$}

\put(34,84){$3$}

\put(84,34){$3$}

\put(34,60){$2$}

\put(84,60){$2$}

\put(60,34){$6$}

\put(60,84){$6$}

\end{picture}
\caption{The Markov partition for the map $\mathcal{A}$.}\label{fig.Casdagli-Markov1}
\end{figure}

\subsection{Spectrum and Trace Map}

Denote by $\ell_V$ the line
$$
\ell_V = \left\{ \left(\frac{E-V}{2}, \frac{E}{2}, 1 \right) : E \in \Bbb{R} \right\}.
$$
It is easy to check that $\ell_V \subset S_V$. An energy $E \in \Bbb{R}$ belongs to the spectrum $\Sigma_V$ of the Fibonacci Hamiltonian if and only if the positive semiorbit of the point $(\frac{E-V}{2}, \frac{E}{2}, 1)$ under iterates of the trace map $T$ is bounded; see \cite{S87}. Moreover, stable manifolds of points in $\Omega_V$ intersect the line $\ell_V$ transversally if $V > 0$ is sufficiently small \cite{DG1} or if $V \ge 16$ \cite{Cas}. It is an open problem whether this transversality condition holds for all $V > 0$. The critical value $V_0$ in Theorem~\ref{t.main} is given by the largest number in $(0,\infty]$ for which the transversality condition holds for $V \in (0,V_0)$.

\section{Proof of Theorems \ref{t.main} and \ref{t.main2}}

This section is devoted to the proof of Theorems \ref{t.main} and \ref{t.main2}. In the proof we use freely notions and notation that are standard in the modern theory of dynamical systems; the comprehensive encyclopedia \cite{KH} can be used as a standard reference.    %We first prove exact-dimensionality and then address $d_V < \dim_H \Sigma_V$, smoothness of $d_V$ in $V$, and $\lim_{V \downarrow 0} d_V = 1$.

Consider the semi-conjugacy $F : \T^2 \to \mathbb{S}$, compare Figure~\ref{fig.semiconj}, and the push-forward $\mu$ of Lebesgue measure under $F$, that is, $\mu_0 = F_*(\mathrm{Leb})$.

\begin{figure}[htb] {\begin{minipage}{5cm} \includegraphics[width=1.2\textwidth]{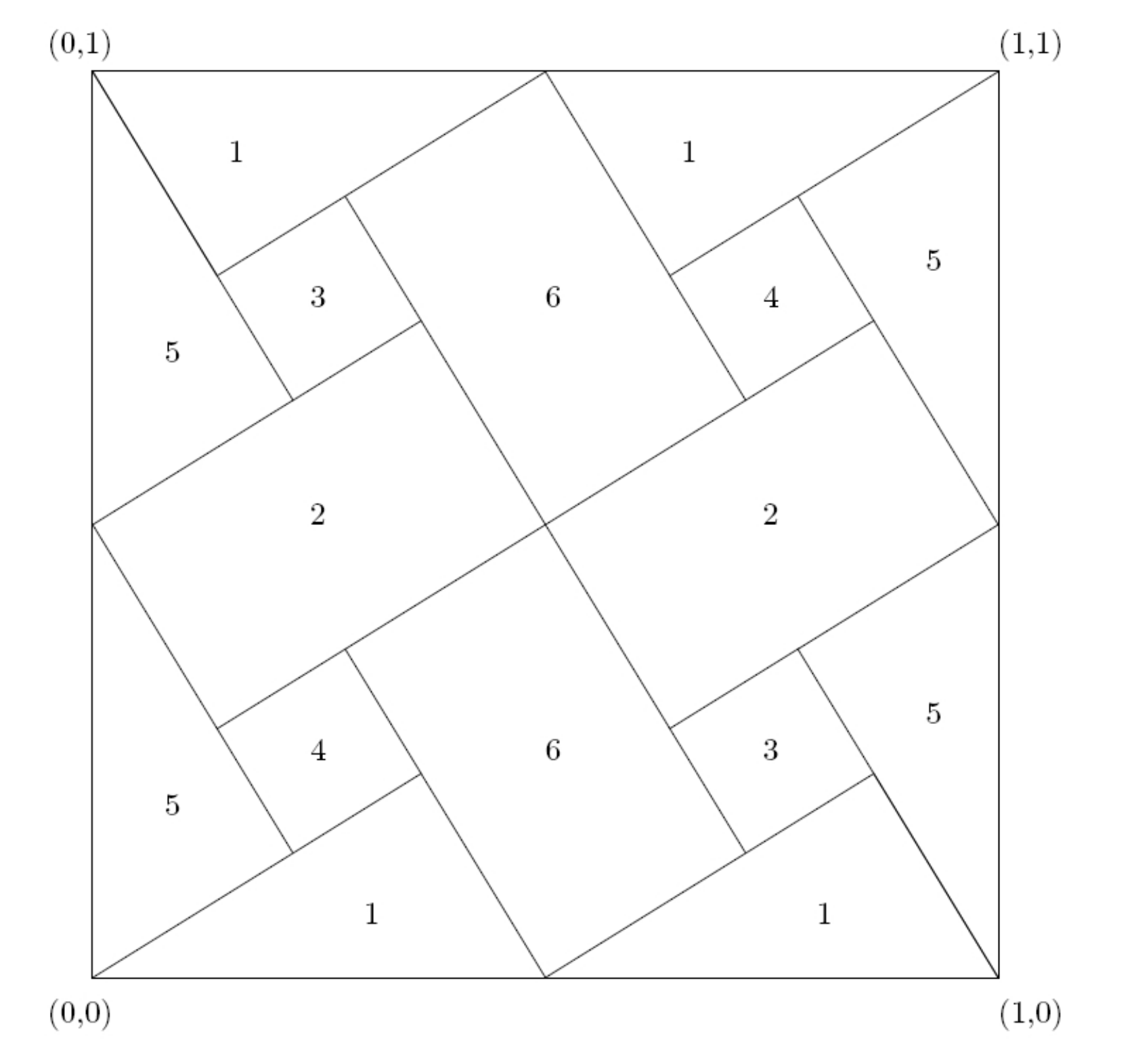}
\label{fig:sing0.0} \end{minipage} \; \; \; \; \; {\Large $\xrightarrow{F}$} \ \
\begin{minipage}{5cm}
\includegraphics[width=1.25\textwidth]{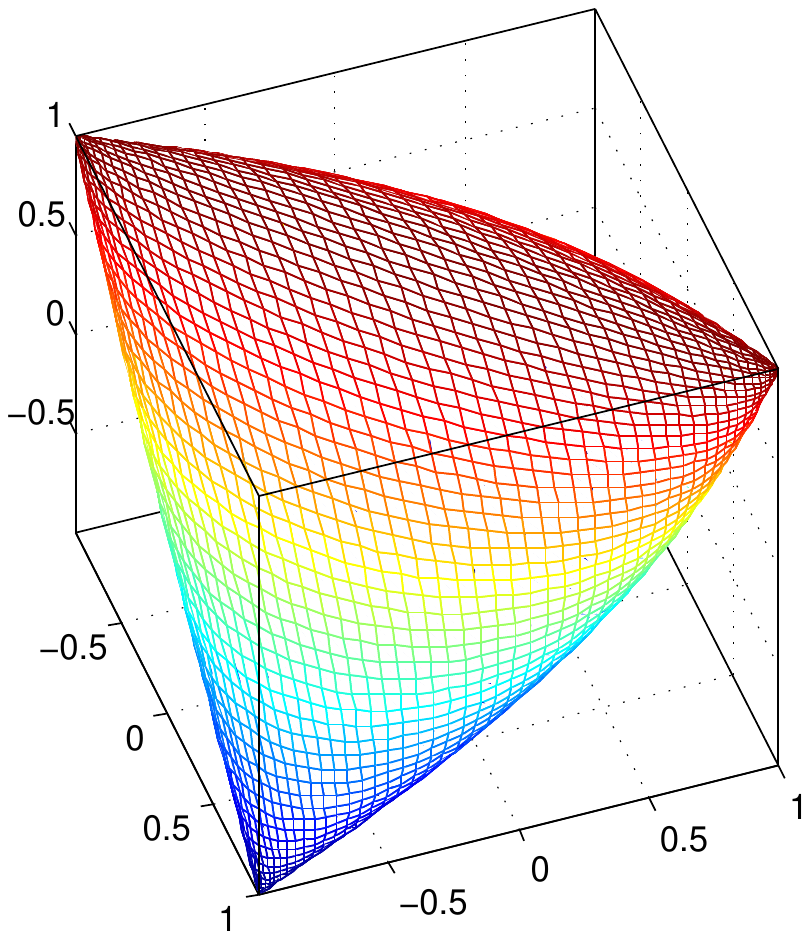}
\end{minipage}}
\caption{The semi-conjugacy $F$ between the linear map $\mathcal{A}$ and the trace map $T$ on the central part $\mathbb{S}$ of the Cayley cubic.}\label{fig.semiconj}
\end{figure}

Since $\mathcal{A} : \T^2 \to \T^2$ is linear, $\mathrm{Leb}$ on $\T^2$ is the measure of maximal entropy for $\mathcal{A}$. We have
$$
h_\mathrm{top}(T|_\mathbb{S}) = h_\mathrm{top}(\mathcal{A}) = h_\mathrm{Leb}(\mathcal{A}) = h_{\mu_0}(T) = \log \frac{3 + \sqrt{5}}{2},
$$
and hence $\mu_0$ is the measure of maximal entropy for $T|_\mathbb{S}$.

Consider the Markov partition for $\mathcal{A}$, described in Subsection~\ref{ss.vequzero} and shown in Figure~\ref{fig.Casdagli-Markov1}, and the element $R$ that contains the interval $I = [0,1/2] \times \{0\}$. Note that the measure on this interval obtained by projection of normalized Lebesgue measure on $R$ along the stable manifolds coincides with normalized Lebesgue measure on $I$.

\begin{Cl}\label{c.1}
The push-forward of the normalized Lebesgue measure on $I$ under $F$, which is a probability measure on $\ell_0 \cap \mathbb{S}$, corresponds to the free density of states measure under the identification
\begin{equation}\label{e.ident}
J_V: E \mapsto \left( \frac{E - V}{2} , \frac{E}{2} , 1 \right)
\end{equation}
{\rm (}here with $V = 0$, and as an identification of $\Sigma_0 = [-2,2]$ and $\ell_0 \cap \mathbb{S}${\rm )}.
\end{Cl}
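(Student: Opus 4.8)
The plan is to prove the claim by a direct computation: restrict $F$ to $I$, read off the induced parametrization of $\ell_0 \cap \mathbb{S}$, push the normalized Lebesgue measure forward through the resulting change of variables, and match the outcome against the explicit formula \eqref{e.freeids} for $N_0$.

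First I would restrict the semi-conjugacy \eqref{e.semiconj} to $I = [0,1/2] \times \{0\}$. For $(\theta,0)$ with $\theta \in [0,1/2]$ one gets
$$
F(\theta, 0) = \left( \cos 2\pi\theta,\ \cos 2\pi\theta,\ 1 \right).
$$
Comparing with the identification \eqref{e.ident} in the case $V = 0$, namely $J_0 : E \mapsto (E/2, E/2, 1)$, this point is exactly $J_0(E)$ with $E = 2\cos 2\pi\theta$. As $\theta$ runs over $[0,1/2]$, the map $\theta \mapsto 2\cos 2\pi\theta$ is a strictly decreasing homeomorphism of $[0,1/2]$ onto $[-2,2]$. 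Hence $F|_I$ is a homeomorphism onto $\ell_0 \cap \mathbb{S} = J_0([-2,2])$, so the push-forward is indeed a probability measure carried by $\ell_0 \cap \mathbb{S}$, and under $J_0$ it corresponds to a probability measure on $\Sigma_0 = [-2,2]$.

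Next I would compute that measure. The normalized Lebesgue measure on $I$ is $2\,d\theta$. Writing $E = 2\cos 2\pi\theta$, we have $dE = -4\pi \sin(2\pi\theta)\,d\theta$, and since $\sin 2\pi\theta = \tfrac12\sqrt{4 - E^2}$ for $\theta \in (0,1/2)$, the change of variables gives, for every bounded continuous $g$,
$$
\int_0^{1/2} g(2\cos 2\pi\theta)\, 2\,d\theta = \int_{-2}^{2} g(E)\, \frac{dE}{\pi\sqrt{4 - E^2}}.
$$
Thus the push-forward, transported to the energy axis via $J_0$, is the measure $\tfrac{1}{\pi\sqrt{4-E^2}}\,dE$ on $[-2,2]$.

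Finally I would match this with the free density of states. Differentiating the middle line of \eqref{e.freeids}, $N_0(-\infty,E) = \tfrac1\pi \arccos(-E/2)$, yields $dN_0(E) = \tfrac{1}{\pi\sqrt{4-E^2}}\,dE$ on $(-2,2)$, which is precisely the measure just obtained; since both are non-atomic and agree on the open interval, they coincide, which establishes the claim. This argument carries no genuine analytic obstacle; the only points requiring care are the orientation of the cosine substitution (it reverses the interval, so the limits of integration flip) and the recognition that the resulting arcsine-type distribution is exactly the classical free density of states. The conceptual content worth emphasizing is precisely this coincidence: the free density of states is the push-forward under the cosine parametrization furnished by $F$ of Lebesgue measure on $I$ — equivalently, in view of the projection remark preceding the claim, of the measure of maximal entropy for $\mathcal{A}$ projected along stable manifolds.
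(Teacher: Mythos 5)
Your computation is correct and is exactly what the paper's one-line proof ("this follows from the explicit form of $F$ and of $dN_0$") leaves to the reader: restricting \eqref{e.semiconj} to $I$ gives $E = 2\cos 2\pi\theta$, and the cosine change of variables turns $2\,d\theta$ into the arcsine distribution $\frac{dE}{\pi\sqrt{4-E^2}}$, which matches \eqref{e.freeids}. Same approach, just carried out in full detail.
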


\begin{proof}
This follows from the explicit of form of the semi-conjugacy $F$ and the free density of states measure $dN_0$ on the free spectrum $\Sigma_0 = [-2,2]$; see \eqref{e.semiconj} and \eqref{e.freeids}.
\end{proof}

\begin{Cl}\label{c.2}
There exists $0<V_0\le \infty$ such that for $V\in (0, V_0)$, the following statement holds: Consider the line $\ell_V \subset S_V$ and the continuation $\mathcal{R}_V$ of the element $\mathcal{R}_0$ of the Markov partition of $\mathbb{S}$ chosen above for zero coupling that contains $\ell_0 \cap \mathbb{S}$ {\rm (}namely, the image of $R$ under $F${\rm )} to the given value of $V$. Restrict the measure of maximal entropy for $T|_{\Omega_V}$ to $\mathcal{R}_V$, normalize it, and project it to $\ell_V$ along stable leaves. The resulting probability measure $d\tilde N_V$ on $\ell_V$ corresponds to $dN_V$ under the identification \eqref{e.ident}.
\end{Cl}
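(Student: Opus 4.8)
The plan is to pass to symbolic coordinates, in which both the projected measure of maximal entropy and the transported density of states become the \emph{same} $V$-independent one-sided equilibrium measure. For $V\in(0,V_0)$ the set $\Omega_V$ is a locally maximal hyperbolic set, so structural stability provides a Markov partition whose combinatorics is the continuation of the one fixed at $V=0$; thus $T_V|_{\Omega_V}$ is topologically conjugate to a subshift of finite type $(\Sigma,\sigma)$ whose transition matrix does not depend on $V$, and the coding maps $\pi_V:\Sigma\to\Omega_V$ depend continuously on $V$ and limit onto the semi-conjugate picture on $\mathbb{S}$ as $V\downarrow 0$. The measure of maximal entropy $\nu_V$ corresponds under $\pi_V$ to the two-sided Parry measure $\hat\nu$ of $(\Sigma,\sigma)$, which is therefore also $V$-independent in these coordinates. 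Throughout, the element $\mathcal{R}_V$ is the continuation of the element $R$ chosen at $V=0$, and (as verified for $V=0$ in Claim~\ref{c.1}) it is precisely the one whose stable-leaf projection onto $\ell_V$ covers the whole spectrum, so that the normalized restricted measure is comparable to the full density of states.

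First I would analyze the projection along stable leaves. Since the stable manifold of a point of $\Omega_V$ consists of points sharing the same forward itinerary, collapsing $\mathcal{R}_V$ onto $\ell_V$ along stable leaves corresponds symbolically to passing from $\Sigma$ to the one-sided shift $\Sigma^+$ of forward itineraries. The transversality of $\ell_V$ to the stable foliation, which holds exactly for $V<V_0$, guarantees that this projection is a homeomorphism of the relevant part of $\Omega_V$ onto $\Sigma_V\subset\ell_V$, so that no mass is collapsed transversally. Consequently $d\tilde N_V$ is the image, under the forward-itinerary coordinate $E\mapsto\omega^+(E)$ on $\ell_V$, of the normalized one-sided Parry measure $\hat\nu^+$; in particular $d\tilde N_V$, transported back to $\Sigma^+$, is $V$-independent.

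Next I would identify the density of states with the same object. Here I would use the canonical periodic approximants: the period-$F_k$ approximant has exactly $F_k$ spectral bands, its integrated density of states increases by precisely $1/F_k$ across each band, and $dN_V$ is the weak-$*$ limit of the measures that equidistribute mass over these $F_k$ bands. The trace-map description of the bands assigns to each band a finite itinerary determined by the Markov partition, and the nesting of bands at consecutive levels is governed by the $V$-independent transition matrix of $(\Sigma,\sigma)$; this identifies the level-$k$ bands with the admissible length-$k$ cylinders. Normalized on the fixed starting region $\mathcal{R}_V$, the measures equidistributing over these cylinders converge weakly to $\hat\nu^+$, since for a fixed initial symbol the count of forward extensions is asymptotically proportional to the right Perron weights, which is exactly the conditional structure of the Parry measure. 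Hence $(J_V)_*dN_V$, transported to $\Sigma^+$, also equals $\hat\nu^+$, and comparing with the previous paragraph gives $(J_V)_*dN_V=d\tilde N_V$. The $V=0$ instance of this identity is Claim~\ref{c.1}, which anchors the continuation and serves as a consistency check.

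The hard part will be the identification in the third paragraph: showing that the spectrally defined density of states coincides with the combinatorial (Parry) equidistribution over bands, \emph{uniformly} in $V\in(0,V_0)$. This requires making the band--cylinder correspondence precise through the trace map and verifying that the branching of the band tree is governed exactly by the transition matrix, so that the limit of the approximant measures is the maximal-entropy measure and not some other Gibbs state; the transversality hypothesis is what turns $\omega^+$ into a genuine homeomorphism on $\ell_V$ and hence what allows the two transported measures to be compared on the common space $\Sigma^+$. Controlling these correspondences as $V\downarrow 0$, where the hyperbolic picture degenerates to the pseudo-Anosov map on $\mathbb{S}$, is the most delicate point, and it is there that the continuity of the coding maps $\pi_V$ and the anchoring provided by Claim~\ref{c.1} are essential.
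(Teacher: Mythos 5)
Your proposal takes a genuinely different route from the paper's. The paper never touches periodic approximants or band counting. Instead it characterizes the family $\{dN_V\}$ axiomatically by four soft properties --- support equal to $\Sigma_V$, agreement with $dN_0$ at $V=0$, continuity in $V$, and $V$-independence of the mass assigned to the interval between any two continuously varying gaps --- and observes that, since $\Sigma_V$ is a Cantor set whose gaps continue continuously in $V$ (Theorem~1.5 of \cite{DG3}), these properties determine the family uniquely. Property (iv) holds for $dN_V$ by gap labelling, and for the projected maximal-entropy measure because the sub-rectangle $\Pi_V\subset\mathcal{R}_V$ cut out by the stable manifolds bounding two gaps has $V$-independent symbolic preimage $h_V^{-1}(\Pi_V)$, so $\mu_{max}(h_V^{-1}(\Pi_V))$ does not move. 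Your observation that the Parry measure is $V$-independent in symbolic coordinates is exactly this last point; but you then pin the measure down by a direct computation at each fixed $V$ rather than by uniqueness-from-gap-labels. If completed, your argument would be more self-contained (it needs neither the continuation from $V=0$ nor Claim~\ref{c.1} except as a check); the paper's argument is much shorter because gap labelling does all the counting.

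The genuine gap is the band--cylinder correspondence in your third paragraph, and it is not merely a technicality. First, ``identifies the level-$k$ bands with the admissible length-$k$ cylinders'' cannot be literally correct: the period-$F_k$ approximant has $F_k\sim\mu^k$ bands, while the number of length-$k$ cylinders for $T|_{\Omega_V}$ grows like $e^{k\, h_\mathrm{top}}=\mu^{2k}$, so the correspondence between Raymond's band hierarchy \cite{Ra, DEGT} (with its type-A/type-B branching rules) and the trace-map Markov partition of Casdagli and \cite{DG1} involves a nontrivial re-indexing that has to be set up and proved; this is essentially the whole content of the claim relocated to symbolic coordinates. Second, to compute $dN_V$ of a fixed band as a limit of descendant-band counts you need more than weak-$*$ convergence of the approximant measures: you must rule out mass leaking across band boundaries, which uses the covering structure $\Sigma_V\subset\sigma(H_k)\cup\sigma(H_{k+1})$ and the fact that band edges are not atoms. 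Your Perron--Frobenius computation (conditioning on the initial symbol so that forward-extension counts reproduce the conditional Parry weights) is correct and would finish the proof once these two points are established, but as written they are announced rather than proved.
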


\begin{proof}
Let us take $V_0$ as described above in Remark (e). Every gap $U_V$ in the spectrum $\Sigma_V$ has a continuous continuation for all $V \in (0, V_0)$; see Theorem 1.5 from \cite{DG3}.

Suppose we have a family of measures $\{\nu_{V}\}$ on $\mathbb{R}$, $V\in [0, V_0)$, satisfying the following properties:

\vspace{8pt}

(i) $\text{supp}\,\nu_V=\Sigma_V$;

\vspace{5pt}

(ii) $\nu_0=dN_0$;

\vspace{5pt}

(iii) $\{\nu_{V}\}$ depends continuously on $V$, $V\in [0, V_0)$;

\vspace{5pt}

(iv) for any two continuous families of gaps $\{U_V\}$ and $\{W_V\}$ in the spectrum $\Sigma_V$, the measure $\nu_V(E_1, E_2)$, where $E_1 \in U_V$ and $E_2 \in W_V$, is independent of $V$.

\vspace{5pt}

Notice that since $\Sigma_V$ for $V\in (0, V_0)$ is a Cantor set, the properties (i)--(iv) imply that $\nu_V = dN_V$ for $V\in[0, V_0)$. We will show that the family of measures $\left(J_V^{-1}\right)_*(d\widetilde{N}_V)$, where $d\widetilde{N}_V$ is a measure on $\ell_V$ described in the statement of Claim~\ref{c.2}, satisfies the properties (i)--(iv). This will prove Claim \ref{c.2}.

Property (i) holds for $\left(J_V^{-1}\right)_*(d\widetilde{N}_V)$ since $\Sigma_V = J_V^{-1} \left( W^s(\Omega_V)\cap \ell_V \right)$. Indeed, an energy $E \in \Sigma_V$  if and only if the positive semiorbit of the point $J_V(E)$ under iterates of $T$ is bounded \cite{S87}. On the other hand, the set of all points with bounded positive semiorbits is exactly the stable lamination $W^s(\Omega_V)$ of the hyperbolic set $\Omega_V$; see \cite{Can}.

Property (ii) follows from the fact that Lebesgue measure is a measure of maximal entropy for the hyperbolic automorphism $\mathcal{A}$ \eqref{e.a} and Claim \ref{c.1}.

In order to check properties (iii) and (iv), we need to present a better description of the measure of maximal entropy for $T|_{\Omega_V}$. We know that the restriction $T|_{\Omega_V}$ is topologically conjugate to a topological Markov chain $\sigma:\Sigma_{TMC}\to \Sigma_{TMC}$; see \cite{DG1} for the explicit description of the topological Markov chain $\Sigma_{TMC}$. Denote the conjugacy by $h_V:\Sigma_{TMC}\to \Omega_V$. Let $\mu_{max}$ be the measure of maximal entropy for $\sigma:\Sigma_{TMC}\to \Sigma_{TMC}$, and set $\mu_V=(h_V)_*\mu_{max}$. As $V \to 0$, the family of conjugacies $\{h_V\}$ converges to a semiconjugacy $h_0:\Sigma_{TMC}\to \mathbb{S}$ between the topological Markov chain $\sigma:\Sigma_{TMC}\to \Sigma_{TMC}$ and $T|_{\mathbb{S}}$. Notice that $(h_0)_*\mu_{max}=F_*(Leb)$ since both measures are measures of maximal entropy for $T|_{\mathbb{S}}$. Since the family of maps $\{h_V\}_{V\in[0, V_0)}$ as well as the family of projections along the stable leaves depend continuously on $V$, property~(iii) follows.

Finally, consider Figure~\ref{fig.Casdagli-Markov}, which shows the Markov partition for the map $\mathcal{A}$, the rectangle $R$, and the partition of $R$ by pieces of stable manifolds. Consider the push-forward of this picture under $F$. This gives a partition of $F(R) = \mathcal{R}_0$ into small rectangles. As $V$  becomes strictly positive, the gaps in $\Sigma_V$ open precisely at the points in $\Sigma_0 = [-2,2]$ where the the stable manifolds of singularities intersect $\ell_0$ (modulo the identification \eqref{e.ident}). For given families of gaps $\{U_V\}$ and $\{W_V\}$ in the spectrum $\Sigma_V$, consider the families of gaps $\{J_V(U_V)\}$ and $\{J_V(W_V)\}$ on $\ell_V$. The boundaries of these gaps are on stable manifolds of periodic points that were born from singularities. The measure $d\widetilde{N}_V$ of the interval on $\ell_V$ between these gaps is given by the (normalized) measure $\mu_V$ of a sub-rectangle $\Pi_V \subset \mathcal{R}_V$ formed by these stable manifolds. We have $\mu_V(\Pi_V) = \mu_{max}\left(h^{-1}_V(\Pi_V)\right)$. But the set $h^{-1}_V(\Pi_V)\subset \Sigma_{TMC}$ is $V$-independent, therefore the measure $d\widetilde{N}_V$ of the interval between $\{J_V(U_V)\}$ and $\{J_V(W_V)\}$ is also $V$-independent. This proves property (iv) and hence Claim \ref{c.2}.
\end{proof}
%
%
%Consider Figure~\ref{fig.Casdagli-Markov}, which shows the Markov partition for the map $\mathcal{A}$, the rectangle $R$, and the partition of $R$ by pieces of stable manifolds. Consider the push-forward of this picture under $F$. This gives a partition of $F(R) = \mathcal{R}_0$ into small rectangles and, by Claim~\ref{c.1}, we can describe the measure $dN_0$ on $\ell_0 \cap \mathbb{S}$ (modulo the identification \eqref{e.ident}) in terms of the measure of these rectangles with respect to $F_*(\mathrm{Leb}) = \mu_0$.

\begin{figure}[htb]
\setlength{\unitlength}{0.8mm}
\begin{picture}(120,120)

\put(10,10){\framebox(100,100)}

\put(5,5){(0,0)}

\put(110,5){(1,0)}

\put(5,112){(0,1)}

\put(110,112){(1,1)}

\put(10,10){\line(161,100){36.2}}

\put(60,10){\line(-61,100){13.8}}

%\put(40,16){$1$}

%

%

\put(60,10){\line(161,100){36.2}}

\put(110,10){\line(-61,100){13.8}}

\put(90,16){$1$}

\put(60,110){\line(-161,-100){36.2}}

\put(10,110){\line(61,-100){13.8}}

\put(25,100){$1$}

\put(110,110){\line(-161,-100){36.2}}

\put(60,110){\line(61,-100){13.8}}

\put(75,100){$1$}

\put(10,60){\line(61,-100){22.1}}

\put(18,28){$5$}

\put(10,60){\line(161,100){22.1}}

\put(10,110){\line(61,-100){22.1}}

\put(18,78){$5$}

\put(110,60){\line(-61,100){22.1}}

\put(102,38){$5$}

\put(110,60){\line(-161,-100){22.1}}

\put(110,10){\line(-61,100){22.1}}

\put(102,88){$5$}

\put(60,60){\line(-161,-100){36.3}}

\put(60,60){\line(161,100){36.3}}

\put(60,60){\line(-61,100){22.1}}

\put(60,60){\line(61,-100){22.1}}

\put(88,46.35){\line(-161,-100){14.3}}

\put(32,73.65){\line(161,100){14.3}}

\put(37.88,46.28){\line(61,-100){8.4}}

\put(82.12,73.72){\line(-61,100){8.4}}

\put(34,34){$4$}

\put(84,84){$4$}

\put(34,84){$3$}

\put(84,34){$3$}

\put(34,60){$2$}

\put(84,60){$2$}

\put(60,34){$6$}

\put(60,84){$6$}

\put(33,9){$R$}

\put(10,10){\line(61,-100){13.8}}

\put(60,10){\line(-161,-100){36.2}}

\put(60,5){$\left(\frac{1}{2}, 0\right)$}

\put(13.415, 12.121){\line(61,-100){13.8}}

\put(16.83, 14.242){\line(61,-100){13.8}}

\put(20.245, 16.363){\line(61,-100){13.8}}

\put(23.66, 18.484){\line(61,-100){13.8}}

\put( 27.075,  20.605){\line(61,-100){13.8}}

\put( 30.49,  22.726){\line(61,-100){13.8}}

\put(33.905,  24.847){\line(61,-100){13.8}}

\put(37.32,  26.968){\line(61,-100){13.8}}

\put( 40.737,  29.089){\line(61,-100){13.8}}

\put( 44.15,  31.21){\line(61,-100){13.8}}

\put( 11.138,  10.707){\line(61,-100){13.8}}

\put( 14.553,  12.828){\line(61,-100){13.8}}

\end{picture}

\

\

\

\caption{The Markov partition for the map $\mathcal{A}$, the rectangle $R$, and the partition of $R$ by pieces of stable manifolds.}\label{fig.Casdagli-Markov}
\end{figure}
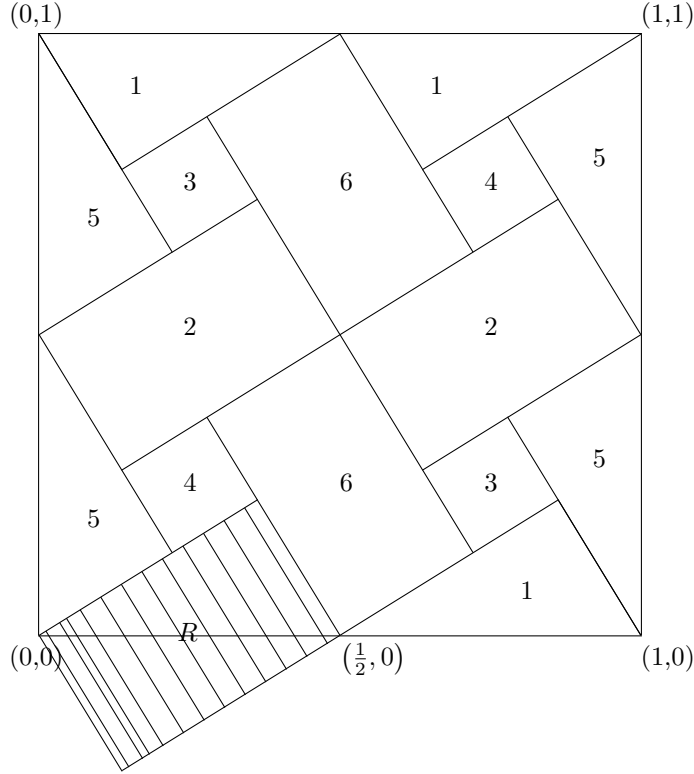

We are now in a position to prove the four main assertions of Theorems \ref{t.main} and \ref{t.main2}:

\begin{proof}[Proof {\rm (}that $dN_V$ is exact-dimensional{\rm )}.]
Consider $\mu_V$, the measure of maximal entropy for $T|_{\Omega_V}$. It has a local product structure and hence
\begin{equation}\label{e.lps}
\frac{1}{\mu_V(\Pi_V)} \mu_V|_{\Pi_V} \simeq \mu_V^s \times \mu_V^u.
\end{equation}
Due to \cite[Proposition 26.1]{P}, $\mu_V^u$ is exact-dimensional. Denote by $d_V$ the almost everywhere constant value of the local scaling exponent of $\mu_V^u$. We note for later use that \cite[Proposition 26.1]{P} also implies that
\begin{equation}\label{e.pform}
d_V = \frac{h_\mathrm{top}(T|_{\Omega_V})}{\mathrm{Lyap}^u(\mu_V)},
\end{equation}
where $\mathrm{Lyap}^u(\mu_V)$ is a largest Lyapunov exponent with respect to $\mu_V$.
By Claim~\ref{c.2} and \eqref{e.lps}, the image of $\mu_V^u$ under the stable holonomy map is (through \eqref{e.ident} equivalent to) the density of states measure $dN_V$. Moreover, since the stable foliation is $C^1$, exact-dimensionality and local scaling exponents are preserved, so that the exact-dimensionality of $dN_V$ with almost sure local scaling exponent $d_V$ follows.
\end{proof}

\begin{proof}[Proof {\rm (}that $d_V < \dim_H \Sigma_V$ for small $V > 0${\rm )}.]
For $p \in \Omega_V$, let
$$
\varphi_V(p) = - \log \left\| DT_p|_{E^u_p} \right\|,
$$
and let $\nu_V$ be the equilibrium measure for the potential $(\dim_H \Sigma_V) \cdot \varphi_V$. Then, on $W^u_\mathrm{loc}$, the set of regular points for $\nu_V$ has Hausdorff dimension $\dim_H \Sigma_V$ due to \cite[Theorem~1]{MM}. Moreover, the proof of \cite[Corollary~3]{MM} implies that
\begin{equation}\label{e.mmform}
\frac{h_\mathrm{top}(T|_{\Omega_V})}{\mathrm{Lyap}^u(\mu_V)} \le \dim_H \Sigma_V,
\end{equation}
and equality holds in \eqref{e.mmform} if and only if
$$
\sup_\xi \left( h_\xi - \dim_H \Sigma_V \cdot \mathrm{Lyap}^u (\xi) \right) = h_\mathrm{top} - \dim_H \Sigma_V \cdot \mathrm{Lyap}^u(\mu_V).
$$
Due to uniqueness of equilibrium states, it follows that equality holds in \eqref{e.mmform} if and only if $\mu_V$ is the equilibrium state for the potential $\dim_H \Sigma_V \cdot \varphi$. The latter condition implies by \cite[Proposition~20.3.10]{KH} that the potential $\dim_H \Sigma_V \cdot \varphi_V$ is cohomologous to a constant potential. This in turn implies by \cite[Theorem~19.2.1]{KH} that the averaged multipliers over all periodic points of $T|_{\Omega_V}$ must be the same.

We claim that the last statement fails for $V > 0$ sufficiently small. Putting everything together, it follows that we have strict inequality in \eqref{e.mmform} for $V > 0$ sufficiently small, which implies  $d_V < \dim_H \Sigma_V$ due to \eqref{e.pform}.

To conclude this part of the proof, let us prove the claim just made. For $a \in \R$, note that $T^6(0,0,a) = (0,0,a)$. On $S_V$, we find such a six-cycle for $a^2 = 1 + \frac{V^2}{4}$. We have
$$
DT^6(0,0,a) = \begin{pmatrix} 16 a^4 - 4 a^2 + 1 & 8 a^3 & 0 \\ 8 a^3 & 4 a^2 + 1 & 0 \\ 0 & 0 & 1 \end{pmatrix}.
$$
For $a = 1$ (i.e., $V = 0$), we have
$$
DT^6(0,0,1) = \begin{pmatrix} 18 & 8 & 0 \\ 8  & 5 & 0 \\ 0 & 0 & 1 \end{pmatrix},
$$
which has eigenvalues $\{ 1 , 9 - 4 \sqrt{5} , 9 + 4 \sqrt{5} \}$. Thus, for $V > 0$ small, the averaged multiplier for the six cycle (for $a$ appropriately chosen as indicated above), must be close to $(9 + 4 \sqrt{5})^{1/6}$.

On the other hand, the two-periodic points of $T$ are given by
$$
\mathrm{Per}_2(T) = \left\{ (x,y,z) : x = z , y = \frac{x}{2x-1} \right\}.
$$
For $V > 0$ small, the two-cycles in $\Omega_V \cap \mathrm{Per}_2(T)$ have multipliers near the squares of the eigenvalues of
$$
DT(1,1,1) =  \begin{pmatrix} 2 & 1 & 0 \\ 2  & 0 & 1 \\ -1 & 0 & 0 \end{pmatrix},
$$
and these eigenvalues are $\{ -1 , \frac{3 - \sqrt{5}}{2} , \frac{3 + \sqrt{5}}{2} \}$. Since $(9 + 4 \sqrt{5})^{1/6} \not= \frac{3 + \sqrt{5}}{2}$, the claim follows.
\end{proof}

\begin{proof}[Proof {\rm (}that $d_V$ is $C^\infty${\rm )}.]
Due to \eqref{e.pform},
$$
d_V = \frac{h_\mathrm{top}(T|_{\Omega_V})}{\mathrm{Lyap}^u(\mu_V)}.
$$
Here, $h_\mathrm{top}(T|_{\Omega_V})=\log\frac{3+\sqrt{5}}{2}$ is independent of $V$, and $\mathrm{Lyap}^u(\mu_V)=\int_{\Omega_V} (- \varphi_V) \, d\mu_V= \int_{\Sigma_{TMC}} (- \tilde \varphi_V(\omega)) \, d\tilde \mu(\omega)$, where $\tilde \mu$ is a measure of maximal entropy for $\sigma:\Sigma_{TMC}\to \Sigma_{TMC}$, and $\tilde \varphi_V=\varphi_V\circ h_V$. Therefore
$$
\frac{d}{dV}\mathrm{Lyap}^u(\mu_V)=-\int_{\Sigma_{TMC}} \left(\frac{d}{dV} \tilde \varphi_V(\omega)\right) \, d\tilde \mu(\omega).
$$
Notice that for a given $\omega\in \Sigma_{TMC}$, the set $\{h_V(\omega)\}_{V>0}$ forms a central leaf of the partially hyperbolic set $\cup_{V>0}\Omega$. The collection of these central leaves forms an invariant lamination which is $r$-normally hyperbolic for any $r\in \mathbb{N}$ (see \cite{HPS} for terminology and results on normal hyperbolicity). Indeed, since the central leaves are parameterized by the value of the Fricke-Vogt invariant $G$ (see \eqref{e.FVinvariant}), which is preserved by the trace map $T$, there is no asymptotic contraction or expansion along these central leaves. This implies (see Theorem 6.1 from \cite{HPS}) that the central leaves are $C^{\infty}$-curves, and their central-unstable invariant manifolds are also $C^{\infty}$. Therefore $\tilde \varphi_V(\omega)$ is a $C^{\infty}$ function for a fixed $\omega$, hence $\mathrm{Lyap}^u(\mu_V)$ and $d_V = \frac{h_\mathrm{top}(T|_{\Omega_V})}{\mathrm{Lyap}^u(\mu_V)}$ are also $C^{\infty}$ functions of $V$.
\end{proof}

\begin{proof}[Proof {\rm (}that $\lim_{V \downarrow 0} d_V = 1${\rm )}.]
Denote $L_V = \mathrm{Lyap}^u(\mu_V)$. Using \eqref{e.pform}, we may infer that
$$
d_V \cdot L_V = h_\mathrm{top}(T|_{\Omega_V}) = h_\mathrm{top}(T|_{\Omega_0}) = L_0.
$$
Thus, $d_V = \frac{L_0}{L_V}$ and it suffices to show that
\begin{equation}\label{e.lecont}
\lim_{V \downarrow 0} L_V = L_0.
\end{equation}

Lifting the measures $\mu_V$ to the topological Markov chain $\Sigma_\mathrm{TMC}$, we obtain the $V$-independent measure $\tilde \mu$. Let us also lift the potentials, that is, we let $\tilde \varphi_V = \varphi_V \circ h_V$. Then,
\begin{align*}
L_V & = \int_{\Omega_V} (- \varphi_V) \, d\mu_V \\
& = \int_{\Sigma_{TMC}} (- \tilde \varphi_V) \, d\tilde \mu \\
& \to \int_{\Sigma_{TMC}} (- \tilde \varphi_0) \, d\tilde \mu \\
& = L_0,
\end{align*}
since $\tilde \varphi_V \to \tilde \varphi_0$ pointwise (due to the continuity of $\{ E^u_V \}$) and all the potentials are uniformly bounded. This proves \eqref{e.lecont} and completes the proof of the theorem.
\end{proof}

\end{document}